\theoremstyle{plain}
\newtheorem{theorem}[equation]{Theorem}
\newtheorem{lemma}[equation]{Lemma}
\newtheorem{proposition}[equation]{Proposition}
\theoremstyle{definition}
\newtheorem{definitions}[equation]{Definition}
\newtheorem{remark}[equation]{Remark}
\newtheorem{question}[equation]{Question}
\newcommand{\IR}{\mathbb{R}}
\DeclareMathOperator{\csch}{csch}
\title{Price Inequality and the Growth of Harmonic Functions on Non-Positively Curved Manifolds}
 \author{\small{Luca F. Di Cerbo}\footnote{Supported in part by NSF grant DMS-2104662} \\ \scriptsize{University of Florida} \\ \footnotesize{\textsf{ldicerbo@ufl.edu}} \and \small{Hayden Hunter}\\ \scriptsize{Florida International University} \\ \footnotesize{\textsf{hhunter@fiu.edu}} \and
\small{Aaron K. Thrasher} \\ 
 \footnotesize{\textsf{akthrasher17291@gmail.com}}}
\date{}
\begin{document}

\maketitle

\begin{abstract}
We obtain effective estimates for the growth rate of the $L^2$-energy of harmonic functions on geodesic balls in complete simply connected non-positively curved Riemannian manifolds with pinched sectional curvature. Our study relies upon a double-sided Price inequality for harmonic functions. Finally, we apply this circle of ideas to study the analytical structure of a potential counterexample to the Singer conjecture in degree one. 
\footnote{2020 {\em Mathematics Subject Classification}. Primary 53C21; Secondary 31B05.}
\end{abstract}

\vspace{9cm}

\tableofcontents

\vspace{1cm}


\section{Introduction and Organization}
Let $(M^n, g)$ be a non-compact complete Riemannian manifold of infinite volume. In this paper, we study the behavior of
\begin{align}\label{Gf}
g_{f}(R):=\int_{B_R(p)}f^2d\mu_g, 
\end{align}
as $R\to \infty$, where $f:M\to \IR$ is a harmonic function. In this context, recall the following result of S.-T. Yau (cf. \cite[Theorem 3]{Yau76}):
\begin{theorem}[Yau]
Let $f$ be a harmonic function on complete Riemannian manifold $M^{n}$. Then, $\int_{M}f^{p} = \infty$ for $p >1$ or $f$ is a constant function. 
\end{theorem}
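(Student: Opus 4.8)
The goal is to show that a non-constant harmonic function $f$ on a complete Riemannian manifold $M^n$ cannot have $\int_M|f|^p\,d\mu_g<\infty$ for any $p>1$; equivalently, and this is what I would prove, \emph{if} $\int_M|f|^p\,d\mu_g<\infty$ for some $p>1$ then $f$ is constant. If $M$ is compact the maximum principle already forces $f$ to be constant, so we may assume $M$ is non-compact, and by restricting to a connected component we may assume $M$ is connected. The plan is the classical cutoff / Caccioppoli argument: test the equation $\Delta f=0$ against a weight of the form $\phi^2|f|^{p-2}f$, where $\phi$ is a compactly supported cutoff, and then let the support of $\phi$ exhaust $M$.

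Concretely, I would fix a point $o\in M$ and, for each $R>0$, pick a Lipschitz function $\phi=\phi_R$ with $0\le\phi\le1$, $\phi\equiv1$ on $B_R(o)$, $\operatorname{supp}\phi\subset B_{2R}(o)$, and $|\nabla\phi|\le C/R$; such cutoffs exist on any complete manifold by composing the distance function from $o$ with a fixed one-variable profile, and by Hopf--Rinow the balls $B_R(o)$ exhaust $M$. Using $\Delta f=0$, integrating by parts (no boundary term, since $\phi$ is compactly supported), and using $\nabla\bigl(|f|^{p-2}f\bigr)=(p-1)|f|^{p-2}\nabla f$, one gets
\begin{equation*}
(p-1)\int_M\phi^2|f|^{p-2}|\nabla f|^2\,d\mu_g=-2\int_M\phi\,|f|^{p-2}f\,\langle\nabla\phi,\nabla f\rangle\,d\mu_g .
\end{equation*}
Writing $|f|^{p-1}|\nabla f|=\bigl(|f|^{(p-2)/2}|\nabla f|\bigr)\bigl(|f|^{p/2}|\nabla\phi|\bigr)$, Cauchy--Schwarz and absorption yield the Caccioppoli estimate
\begin{equation*}
(p-1)^2\int_M\phi^2|f|^{p-2}|\nabla f|^2\,d\mu_g\le 4\int_M|f|^p|\nabla\phi|^2\,d\mu_g\le\frac{4C^2}{R^2}\int_M|f|^p\,d\mu_g .
\end{equation*}
Since $\int_M|f|^p\,d\mu_g<\infty$, letting $R\to\infty$ forces $\int_M|f|^{p-2}|\nabla f|^2\,d\mu_g=0$. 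Finally $|\nabla(|f|^{p/2})|^2=\tfrac{p^2}{4}|f|^{p-2}|\nabla f|^2$ almost everywhere, so $|f|^{p/2}\in W^{1,2}_{\mathrm{loc}}(M)$ with vanishing gradient; connectedness makes it constant, hence $|f|$ is constant, and continuity of $f$ on the connected manifold $M$ then forces $f$ itself to be constant.

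The one genuinely delicate point, and the step I expect to be the main obstacle, is justifying the integration by parts when $1<p<2$: then $|f|^{p-2}$ is singular along the nodal set $\{f=0\}$, the weight $|f|^{p-2}f$ is not $C^1$, and a priori $\int_{B_{2R}(o)}\phi^2|f|^{p-2}|\nabla f|^2\,d\mu_g$ need not be finite. I would handle this by the standard regularization $u_\varepsilon:=(f^2+\varepsilon)^{1/2}$, which is smooth, bounded below by $\sqrt\varepsilon$, and satisfies $\Delta u_\varepsilon\ge0$ pointwise because $u_\varepsilon\Delta u_\varepsilon=|\nabla f|^2-|\nabla u_\varepsilon|^2\ge0$ by Kato's inequality $|\nabla u_\varepsilon|\le|\nabla f|$. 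Running the above with $\phi^2u_\varepsilon^{p-1}$ in place of $\phi^2|f|^{p-2}f$ gives $(p-1)^2\int\phi^2u_\varepsilon^{p-2}|\nabla u_\varepsilon|^2\,d\mu_g\le 4\int u_\varepsilon^p|\nabla\phi|^2\,d\mu_g$ with all integrals finite (the supports are compact); one then sends $\varepsilon\to0$ with $R$ fixed — on the compact set $B_{2R}(o)$ the integrands converge monotonically or dominatedly to those for $f$, so Fatou/monotone/dominated convergence recovers the displayed Caccioppoli bound — and only afterwards sends $R\to\infty$. The remaining points (existence of the cutoffs, the $W^{1,2}$-chain rule identifying the weak gradient of $|f|^{p/2}$, and passing from $|f|$ constant to $f$ constant) are routine.
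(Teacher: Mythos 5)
Your argument is correct: it is the classical cutoff/Caccioppoli proof (test $\Delta f=0$ against $\phi^2|f|^{p-2}f$, absorb via Cauchy--Schwarz, let $R\to\infty$, and regularize by $(f^2+\varepsilon)^{1/2}$ to handle $1<p<2$), and you correctly flag and resolve the only delicate point. The paper itself gives no proof of this statement --- it simply cites Yau --- and your argument is essentially Yau's original one from the cited reference, so there is nothing to reconcile.
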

Thus, if $f$ is non-constant (or if $Vol_{g}(M)=\infty$) Yau's result implies that
\[
\lim_{R\to\infty}g_{f}(R)=\infty.
\]
Therefore, we aim to obtain \emph{effective} estimates for the divergence rate of the $L^{2}$-energy of harmonic functions on geodesic balls in $M$. We make our study by developing a \emph{double-sided Price inequality}  for harmonic functions on complete simply connected Riemannian manifolds of non-positive sectional curvature. Our study is a natural extension of the theory of Price inequality for harmonic forms developed by the first-named author and Stern in \cite{DS22}, and later extended in various different geometric and analytic contexts \cite{DS25a}, \cite{DS25b}, \cite{DH25}, and \cite{Ste25}. Recall that the so-called Price inequality first appeared in the context of Yang-Mills theory in \cite{Pri83}.

The typical approach in studying the growth rate of the $L^2$-energy of harmonic functions on geodesic balls is via the mean-value property of sub-harmonic functions. Suppose that $(M, g)$ is a Riemannian manifold with $\sec_{g}\leq k$, and let $B_{R}(p)$ be any geodesic ball. Given a harmonic function $f: M\to \IR$ we have that $u:=f^2$ is sub-harmonic and non-negative. Thus, given a point $p\in M$ where $f(p)\neq 0$, the mean-value property for sub-harmonic functions tells us that 
\begin{align}\label{meanvalue}
\int_{B_R(p)}f^2d\mu_g\geq f^2(p) V_{k}(R)
\end{align}
where $V_{k}(R)$ is the volume of a geodesic ball of radius $R$ in the space form of constant curvature $k$. For more details, we refer to Schoen-Yau lectures \cite[Chapter 2, Section 6]{SchoenY}.

In this paper, we develop a double-sided Price inequality bound for the $L^2$-energy of a harmonic function as defined in Equation \eqref{Gf}. Section \ref{PriceI} is devoted to the development of this estimate. The lower bound we obtain improves on the bound given in Equation \eqref{meanvalue} by incorporating  an auxiliary positive function $\mu$ associated to the given harmonic function. For the details about these estimates, we refer to Definition \ref{defnmu} and Theorem \ref{doublebound}. Interestingly, the $\mu$-function introduced in Definition \ref{defnmu} enjoys the same monotonicity properties of Almgren frequency function when restricted to Euclidean harmonic functions! This interesting fact is outlined in  Appendix \ref{Aaron}. In this regard, this $\mu$-function could be thought of as an analogue of Almgren frequency function in the context of general non-positively curved Riemannian manifolds. Unfortunately, as discussed in \cite[Chapter 4]{Aaron}, the $\mu$-function is not monotonic already on certain harmonic functions on real-hyperbolic space, see more precisely \cite[Proposition 4.2.2]{Aaron}.

In Section \ref{contradiction}, we explore the implications of our estimates in the context of the Singer conjecture for negatively curved closed Riemannian manifold. More precisely, we explore the analytic nature of a possible counterexample to such conjecture in degree one (1st $L^2$-Betti number). The Singer conjecture is currently still open even in degree one on negatively curved spaces in dimensions $n\geq 4$, \cite[Section 7]{DS22}.

In Section \ref{PH}, we explore the consequences of our double-sided Price inequality for \textit{positive} harmonic functions on simply connected negatively curved Riemannian manifolds with pinched sectional curvature. We refer to Theorem \ref{PHmain} for the precise estimates, and we note that the upper bound in this theorem relies on a bound on the integral of the hyperbolic Poisson kernel over geodesic spheres. The latter result is outlined in Appendix \ref{Hayden}.\\

\noindent\textbf{Acknowledgments}. LFDC  thanks Mark Stern for numerous discussions related to the contents of this paper. AKT thanks Thalia Jeffres and Xiaolong Li for the invitation to present these results at a Special Session in the AMS 2025 Spring Central Sectional Meeting.

\section{Double-Sided Price Inequality}\label{PriceI}

 In this section, we derive a double-sided Price inequality for the $L^2$-norm of a harmonic function $f$ on geodesic spheres in a complete Riemannian manifold $(M^n, g)$ of non-positive sectional curvature.

\begin{definitions}\label{defnmu}
For $f: M \to \IR$ harmonic, we define the function
$$\mu_{f, p}(R) = \frac{\int_{B_{R}(p)}\partial_{r}f^2 d\mu_{g}}{\int_{S_{R}(p)}f^2 d\sigma_{R}} = \frac{2\int_{0}^{R}\int_{B_{r}(p)}|\nabla f|^{2}d\mu_{g}dr}{\int_{S_{R}(p)}f^{2}d\sigma_{R}}.$$
When $f$ and $p$ are understood, we suppress the definition above to $\mu(R)$.
\end{definitions}

\begin{remark}\label{lessone}
Since $f$ is harmonic, $\mu$ is clearly well-defined and non-negative. Moreover, $\mu(R)$ is strictly positive for $R>0$ unless $f$ is a constant. Finally, it follows from the divergence theorem that $0\leq \mu < 1$ for spaces whose geodesic spheres have positive mean curvature, see Equation \eqref{eq:4.1}. Spaces with non-positive sectional curvature satisfy this requirement because of Rauch's comparison theorem, see for example \cite[Theorem 6.4.3]{Petersen}.
\end{remark}

We can now state and prove our main estimate.

\begin{theorem}[Double-Sided Price Inequality]\label{doublebound}
\label{thm:4.1}
Let $(M^{n},g)$ be complete and simply connected with sectional curvature $k' \leq \text{sec}_{g} \leq k \leq 0$. Let $f: M \to \IR$ be non-constant and harmonic. Then, over the geodesic ball $B_{R}$ centered at $p$ with $R\geq 1$ we have the estimate
$$C_1e^{\int_{1}^{R}\frac{H_{k}(s)}{1-\mu(s)}ds} \leq \int_{B_{R}}f^{2}d\mu_{g} \leq C_2e^{\int_{1}^{R}\frac{H_{k'}(s)}{1-\mu(s)}ds}$$
where $C_1$ and $C_2$ are some positive constants (depending on $p$ and $f$), $H_{k}$ is mean curvature of geodesic spheres in the $n$-dimensional manifold with constant sectional curvature $k$, and $\mu$ is as defined in \textcolor{blue}{\ref{defnmu}}.
\end{theorem}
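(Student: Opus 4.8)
The plan is to reduce the theorem to a first-order differential comparison for the energy $I(R):=\int_{B_R}f^2\,d\mu_g$. Since $(M,g)$ is Cartan--Hadamard, $r:=d(p,\cdot)$ is smooth on $M\setminus\{p\}$, the geodesic spheres $S_R$ are smooth hypersurfaces, and the coarea formula gives $I'(R)=\int_{S_R}f^2\,d\sigma_R$. Writing $m:=\Delta r$ for the mean curvature of the geodesic sphere through a point, I would first record the identity
\[
\int_{B_R}\partial_r(f^2)\,d\mu_g=\int_{S_R}f^2\,d\sigma_R-\int_{B_R}m\,f^2\,d\mu_g ,
\]
which follows from $\partial_r(f^2)=\operatorname{div}(f^2\nabla r)-m\,f^2$ and the divergence theorem on $B_R\setminus B_\varepsilon(p)$, letting $\varepsilon\to 0$ (the boundary contribution over $S_\varepsilon(p)$ vanishes since its area tends to $0$). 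Comparing with Definition \ref{defnmu}, this says exactly
\[
\Phi(R):=\int_{B_R}m\,f^2\,d\mu_g=\bigl(1-\mu(R)\bigr)\,I'(R).
\]
This identity is the mechanism that brings $\mu$ into the estimate; it also re-derives $\mu<1$ (Remark \ref{lessone}), since $m>0$ (as $\sec_g\le 0$) and $I'(R)=\int_{S_R}f^2\,d\sigma_R>0$ for every $R>0$ — a harmonic function vanishing on $S_R$ would vanish identically, contradicting non-constancy.

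The second ingredient is the Hessian comparison theorem: under $k'\le\sec_g\le k\le 0$ one has, pointwise on $B_R\setminus\{p\}$,
\[
H_k\bigl(r(x)\bigr)\ \le\ m(x)\ \le\ H_{k'}\bigl(r(x)\bigr),
\]
where $H_c$ is the mean curvature of a geodesic sphere in the space form of curvature $c$, a positive and strictly decreasing function of the radius with $H_c(t)\downarrow(n-1)\sqrt{-c}$ as $t\to\infty$ (and $H_0(t)=(n-1)/t$). In particular $m(x)\ge H_k(R)$ throughout $B_R$, and $m(x)\le H_{k'}(R)$ on $S_R$. The lower bound then follows quickly: the identity and $m\ge H_k(R)$ on $B_R$ give $\bigl(1-\mu(R)\bigr)I'(R)=\Phi(R)\ge H_k(R)\,I(R)$, hence $I'(R)/I(R)\ge H_k(R)/(1-\mu(R))$, and integrating over $[1,R]$ yields $I(R)\ge I(1)\,e^{\int_1^R H_k(s)/(1-\mu(s))\,ds}$, i.e. the left-hand inequality with $C_1=I(1)>0$.

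I expect the upper bound to be the main obstacle, because one cannot simply run a Grönwall argument on $I$ from above: $\int_0^R H_{k'}(r)\,I'(r)\,dr$ is not controlled by $H_{k'}(R)\,I(R)$, the decreasing weight $H_{k'}$ over-weighting small radii. The remedy is to run the differential inequality on $\Phi$ instead. Differentiating $\Phi(R)=\int_0^R\bigl(\int_{S_r}m\,f^2\,d\sigma_r\bigr)\,dr$ and using $m\le H_{k'}(R)$ on $S_R$ together with $I'(R)=\Phi(R)/(1-\mu(R))$,
\[
\Phi'(R)=\int_{S_R}m\,f^2\,d\sigma_R\ \le\ H_{k'}(R)\,I'(R)=\frac{H_{k'}(R)}{1-\mu(R)}\,\Phi(R),
\]
so that $\Phi(R)\le\Phi(1)\,e^{\int_1^R H_{k'}(s)/(1-\mu(s))\,ds}$.

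It remains to translate this back into a bound on $I$. When $k<0$ the comparison gives $m(x)\ge H_k(r(x))\ge(n-1)\sqrt{-k}$ on all of $M$, so $\Phi(R)\ge(n-1)\sqrt{-k}\,I(R)$ and therefore $I(R)\le\frac{1}{(n-1)\sqrt{-k}}\,\Phi(R)\le C_2\,e^{\int_1^R H_{k'}(s)/(1-\mu(s))\,ds}$ with $C_2=\Phi(1)/\bigl((n-1)\sqrt{-k}\bigr)$, which is finite and positive since $m\,f^2$ is integrable near $p$ (there $m\sim(n-1)/r$ while $\operatorname{vol}(B_r)\sim c\,r^n$). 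The remaining points to dispatch carefully are: the boundary term at $p$ and the integrability of $m\,f^2$ near $p$; the fact that $1-\mu$ is continuous and strictly positive, hence bounded below on each compact $[1,R]$, so every integral above is finite; and the borderline case $k=0$, which forces $(M,g)$ to be flat and where $1/H_k(R)=R/(n-1)$ is unbounded, so the sharp upper bound there requires a separate argument (or the standing assumption $k<0$).
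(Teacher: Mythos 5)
Your argument is correct and rests on the same two pillars as the paper's proof: the divergence identity $(1-\mu(R))\int_{S_R}f^2\,d\sigma_R=\int_{B_R}Hf^2\,d\mu_g$ and the Rauch/Hessian comparison $H_k(r)\le H\le H_{k'}(r)$. The difference lies in how the differential inequality is closed. The paper runs \emph{both} bounds on the weighted energy $\Phi(R)=\int_{B_R}Hf^2\,d\mu_g$, introducing the integrating factor $\phi_c(R)=e^{-\int_1^R H_c(s)/(1-\mu(s))\,ds}$ through a multiply-and-integrate-by-parts manipulation (which is your Gr\"onwall argument on $\Phi$ in disguise), and only at the very end converts $\int_{B_R}Hf^2$ into $\int_{B_R}f^2$ by sandwiching the ratio between $H_{k'}$- and $H_k$-weighted integrals and applying de l'H\^opital. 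You instead run the lower bound directly on $I(R)=\int_{B_R}f^2\,d\mu_g$, exploiting that $H_k$ is decreasing so that $m\ge H_k(R)$ on all of $B_R$; this is a clean shortcut that yields $C_1=I(1)$ explicitly and skips the conversion step entirely on that side. For the upper bound you correctly diagnose why a direct Gr\"onwall on $I$ fails and pass to $\Phi$, exactly as the paper does, but your conversion back to $I$ via the uniform pointwise bound $m\ge(n-1)\sqrt{-k}$ is arguably more rigorous than the paper's, since the l'H\^opital step only controls the ratio asymptotically and needs an additional continuity/positivity remark to produce constants valid for all $R\ge 1$. Your caveat about $k=0$ is also a genuine observation rather than a gap on your side: the paper's own proof silently assumes $k<0$ (it restates the hypothesis that way and its final step divides by $\sqrt{|k|}$), even though the theorem is stated with $k\le 0$; for the upper bound both arguments genuinely require $k<0$.
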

\begin{proof}
Let $(M^{n},g)$ be complete and simply connected with sectional curvatures $k' \leq \text{sec}_{g} \leq k < 0$. Let $f: M \to \IR$ be non-constant and harmonic. Distinguish a point $p \in M$ as the center of our geodesic spherical coordinate system and consider the vector field $f^2\partial_{r}$, noting that it is well-defined everywhere except $p$ and bounded around this point. Consider the following:
$$\text{div}(f^{2}\partial_{r}) = f^{2}\text{div}(\partial_{r}) + g(\nabla f^{2}, \partial_{r}) =  f^{2}H + \partial_{r}f^{2},$$
where 
\[
H(r, \theta): S_{r}(p)\longrightarrow\IR 
\]
is the mean curvature function over the geodesic sphere $S_{r}(p)$. Integrate both sides:
$$\int_{B_{R}}\text{div}(f^{2}\partial_{r}) d\mu_{g} = \int_{B_{R}}Hf^{2}d\mu_{g} + \int_{B_{R}}\partial_{r}f^2 d\mu_{g}.$$
By the divergence theorem, we retrieve
\begin{equation}\label{eq:4.1}
\int_{S_{R}}f^{2}d\sigma_{R} = \int_{B_{R}}Hf^{2}d\mu_{g} + \int_{B_{R}}\partial_{r}f^2 d\mu_{g}.
\end{equation}
By Green's identity, we have
\begin{align*}
\int_{B_{R}}\Delta_{g}f^{2}d\mu_{g}=2\int_{B_{R}}|\nabla f|^{2}d\mu_{g}=\int_{S_{R}}\partial_{r}f^{2}d\sigma_{R}.
\end{align*}
Now, it is clear that $2\int_{B_{R}}|\nabla f|^{2}d\mu_{R} > 0$ for $f$ non-constant. Therefore, we introduce an auxiliary function $\mu(R)$ as in Definition \ref{defnmu}:
\begin{equation}\label{eq:4.2}
\int_{B_{R}}\partial_{r}f^{2}d\mu_{g} = \mu(R)\int_{S_{R}}f^{2}d\sigma_{R}.
\end{equation}
Applying this auxiliary equation to Equation \eqref{eq:4.1}, we receive
$$(1 - \mu(R))\int_{S_{R}}f^{2}d \sigma_{R} = \int_{B_{R}}Hf^{2}d \mu_{g}.$$
Rauch's comparison theorem (\cite[Theorem 6.4.3]{Petersen}) ensures that $H>0$ for any $R$ so that $\mu(R)<1$ (\textit{cf}. Remark \ref{lessone}).  We now multiply through by a function $\phi_{k}'(r)$ and integrate with respect to $r$ from $1$ to $R$:
$$\int_{1}^{R}\phi_{k}'(r)(1 - \mu(r))\int_{S_{r}}f^{2}d \sigma_{r}dr = \int_{1}^{R}\phi_{k}'(r)\int_{B_{r}}Hf^{2}d \mu_{g}dr.$$
Next, we integrate by parts:
$$\int_{1}^{R}\phi_{k}'(r)\int_{B_{r}}Hf^{2}d \mu_{g}dr = \left[\phi_{k}(r)\int_{B_{r}}Hf^{2}d\mu_{g}\right]_{1}^{R} - \int_{1}^{R}\phi_{k}(r)\int_{S_{r}}Hf^{2}d\sigma_{r}dr$$
$$= \phi_{k}(R)\int_{B_{R}}Hf^{2}d\mu_{g} - \phi_{k}(1)\int_{B_{1}}Hf^{2}d\mu_{g} - \int_{1}^{R}\phi_{k}(r)\int_{S_{r}}Hf^{2}d\sigma_{r}dr.$$
By combining the equations above, we receive
\begin{align*}
\int_{1}^{R}\left[\phi_{k}'(r)(1 - \mu(r))\int_{S_{r}}f^{2}d \sigma_{r} + \phi_{k}(r)\int_{S_{r}}Hf^{2}d\sigma_{r}\right]dr \\
 = \phi_{k}(R)\int_{B_{R}}Hf^{2}d\mu_{g} - \phi_{k}(1)\int_{B_{1}}Hf^{2}d\mu_{g}.
\end{align*}
We simplify the left-hand side for clarity:
$$\int_{B_{R}\setminus B_{1}}f^{2}[\phi_{k}'(r)(1 - \mu(r)) + \phi_{k}(r)H]d\mu_{g} = \phi_{k}(R)\int_{B_{R}}Hf^{2}d\mu_{g} - \phi_{k}(1)\int_{B_{1}}Hf^{2}d\mu_{g}.$$
Rauch's comparison theorem (\cite[Theorem 6.4.3]{Petersen}), assuming $\phi_{k}(r) > 0$, implies that
\begin{align}\label{eq:4.3}
\phi_{k}(R)\int_{B_{R}}Hf^{2}d\mu_{g} -& \phi_{k}(1)\int_{B_{1}}Hf^2d\mu_{g} \\ \notag 
&\geq \int_{B_{R}\setminus B_{1}}f^{2}[\phi_{k}'(r)(1-\mu(r)) + H_{k}(r)\phi_{k}(r)]d\mu_{g}.
\end{align}
We now select $\phi_{k}(r)$ so that
\begin{center}
    $\begin{cases}
        \phi_{k}'(r)(1-\mu(r)) + H_{k}(r)\phi_{k}(r) = 0\\
        \phi_{k}(1) = 1
    \end{cases}$
\end{center}
is satisfied. We proceed to solve this initial value problem by separation of variables:
\begin{align*}
    \frac{d \phi_{k}}{ds}(1-\mu(s)) + H_{k}(s)\phi_{k}(s) &= 0\\
    \frac{1}{\phi_{k}}d\phi_{k} &= -\frac{H_{k}(s)}{1-\mu(s)}\\
    \phi_{k}(R) &= e^{-\int_{1}^{R}\frac{H_{k}(s)}{1-\mu(s)}ds}>0.
\end{align*}
Applying this result to Equation \eqref{eq:4.3}, we receive a lower \textbf{Price} bound,
$$\phi_{k}(R)\int_{B_{R}}Hf^{2}d\mu_{g} \geq C = \int_{B_{1}} Hf^2 d\mu_g > 0.$$
That is to say, we have the inequality
$$\int_{B_{R}}Hf^{2}d\mu_{g} \geq Ce^{\int_{1}^{R}\frac{H_{k}(s)}{1-\mu(s)}ds}.$$
We now want to develop the upper bound. To do this, we return to
$$(1 - \mu(R))\int_{S_{R}}f^{2}d \sigma_{R} = \int_{B_{R}}Hf^{2}d \mu_{g}.$$
Multiply through by \textit{another} choice $\phi'_{k'}(r)$ and integrate from $1$ to $R$:
$$\int_{1}^{R}\phi_{k'}'(r)(1 - \mu(r))\int_{S_{r}}f^{2}d \sigma_{r}dr = \int_{1}^{R}\phi_{k'}'(r)\int_{B_{r}}Hf^{2}d \mu_{g}dr.$$
Proceeding exactly as before, we obtain:
$$\int_{B_{R}\setminus B_{1}}f^{2}[\phi_{k'}'(r)(1 - \mu(r)) + \phi_{k'}(r)H]d\mu_{g} = \phi_{k'}(R)\int_{B_{R}}Hf^{2}d\mu_{g} - \phi_{k'}(1)\int_{B_{1}}Hf^{2}d\mu_{g}.$$
By using the upper estimate on  $H$ given by Rauch's theorem  (\cite[Theorem 6.4.3]{Petersen}), and assuming $\phi_{k'}(r) > 0$, we receive
\begin{align} \label{eq:4.3(2)}
 \int_{B_{R}\setminus B_{1}}f^{2}[\phi_{k'}'(r)(1-\mu(r)) +& H_{k'}(r)\phi_{k'}(r)]d\mu_{g}\\ \notag 
&\geq   \phi_{k'}(R)\int_{B_{R}}Hf^{2}d\mu_{g} - \phi_{k'}(1)\int_{B_{1}}Hf^2d\mu_{g}.
\end{align}
We select $\phi_{k'}(r)$ so that
\begin{center}
    $\begin{cases}
        \phi_{k'}'(r)(1-\mu(r)) + H_{k^\prime}(r)\phi_{k'}(r) = 0\\
        \phi_{k'}(1) = 1.
    \end{cases}$
\end{center}
is satisfied, so that
\begin{align*}
    \phi_{k'}(r) &= e^{-\int_{1}^{r}\frac{H_{k'}(s)}{1-\mu(s)}ds}>0.
\end{align*}
Applying this result to Equation \eqref{eq:4.3(2)}, we receive an \emph{upper} \textbf{Price} bound,
$$\phi_{k'}(R)\int_{B_{R}}Hf^{2}d\mu_{g} \leq C.$$
This gives us the double-sided bound:
\begin{align}\label{doubleHf}
Ce^{\int_{1}^{R}\frac{H_{k}(s)}{1-\mu(s)}ds} \leq \int_{B_{R}}Hf^{2}d\mu_{g} \leq Ce^{\int_{1}^{R}\frac{H_{k'}(s)}{1-\mu(s)}ds}.
\end{align}
Hence, we have the desired \emph{exponential} bounds on $\int_{B_{R}}Hf^{2}d\mu_{g}$. We would like to compare $\int_{B_{R}}Hf^{2}d\mu_{g}$ with $\int_{B_{R}}f^{2}d\mu_{g}$ to retrieve the same bounds for this second integral. If $k' \leq \text{sec}_{g} \leq k < 0$, we have the following.
First, we apply the Rauch's comparison theorem twice:
$$\frac{\int_{B_{R}}f^{2}d\mu_{g}}{\int_{B_{R}}H_{k^\prime}(r)f^{2}d\mu_{g}} \leq \frac{\int_{B_{R}}f^{2}d\mu_{g}}{\int_{B_{R}}Hf^{2}d\mu_{g}} \leq \frac{\int_{B_{R}}f^{2}d\mu_{g}}{\int_{B_{R}}H_{k}(r)f^{2}d\mu_{g}}.$$
We want to study the ratios above as $R\to \infty$. To this aim, we apply de l'H\^{o}pital's rule as the numerator and denominator in each term go to infinity:
$$ \frac{1}{(n-1)\sqrt{|k^\prime|}} \leq \lim_{R \to \infty}\frac{\int_{S_{R}}f^{2}d\sigma_{R}}{\int_{S_{R}}Hf^{2}d\sigma_{R}} \leq \frac{1}{(n-1)\sqrt{|k|}}.$$
Hence, by employing the bound in Equation \eqref{doubleHf}, we retrieve the \emph{exponential bounds} for $\int_{B_{R}}f^{2}d\mu_{g}$ stated in the theorem with some constants $C_1$ and $C_2$.
\end{proof}

\section{On the Singer Conjecture in Degree One}\label{contradiction}

The Singer conjecture predicts the vanishing of the $L^2$-Betti numbers of non-positively curved closed Riemannian manifolds outside the middle dimension. We refer to the recent book \cite{Maxim} for many different points of view on this problem of current research interest. The Singer conjecture is not known in general even in degree one for negatively curved spaces, see \cite[Section 7]{DS22}. In degree one, the vanishing of the first $L^2$-Betti number implies the non-existence of $L^2$-integrable harmonic one forms on the Riemannian universal cover of closed non-positively curved Riemannian manifolds. This is equivalent to the non-existence of harmonic functions with $L^2$-integrable gradient. For this reason, it is of interest to look at the properties of harmonic functions with $L^2$-integrable gradient on non-positively curved simply connected Riemannian manifolds. Our main estimate (Theorem \ref{doublebound}) can be used to prove that such creatures have \textit{small} $L^2$-energy. This is the content of the following result.

\begin{theorem}\label{thm:11.1}
   Let $(M,g)$ be complete and simply connected $n$-manifold with sectional curvatures $k' \leq \text{sec}_{g} \leq k < 0$. Let $f: M \to \IR$ be non-constant and harmonic. Suppose $\int_{B_{R}}|\nabla f|^{2} d\mu_{g}$ is bounded by a uniform constant for every $R$. Then, we have the following inequality for positive constants $C$ and $D$:
$$C\text{Vol}_{g_{h}^{k}}(R)\leq \int_{B_{R}}f^{2}d\mu_{g} \leq D\text{Vol}_{g_{h}^{k'}}(R),$$
where $\text{Vol}_{g_{h}^{k}}(R)$ and $\text{Vol}_{g_{h}^{k'}}(R)$ denote the volumes of a geodesic sphere of radius $R$ in the hyperbolic $n$-space with sectional curvature $k$ and $k^\prime$ respectively.
\end{theorem}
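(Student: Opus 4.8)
The plan is to feed the hypothesis $\int_{B_R}|\nabla f|^2\,d\mu_g\le C_0$ into Theorem \ref{doublebound} by showing that, under this assumption, the auxiliary function $\mu$ of Definition \ref{defnmu} decays to zero exponentially fast, so that the exponents $\int_1^R\frac{H_k(s)}{1-\mu(s)}\,ds$ and $\int_1^R\frac{H_{k'}(s)}{1-\mu(s)}\,ds$ differ from the ``naive'' exponents $\int_1^R H_k(s)\,ds$ and $\int_1^R H_{k'}(s)\,ds$ only by a bounded additive error. Since $\frac{d}{dr}\log A_k(r)=H_k(r)$, where $A_k(r)$ denotes the $(n-1)$-dimensional volume of the geodesic sphere $S_r$ in the space form of curvature $k$, one has $e^{\int_1^R H_k(s)\,ds}=A_k(R)/A_k(1)$; and $A_k(R)$ is comparable, up to multiplicative constants depending only on $n,k$, to $\text{Vol}_{g_{h}^{k}}(R)$ for $R\ge 1$ (both grow like $e^{(n-1)\sqrt{|k|}R}$). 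Thus the exponential estimate of Theorem \ref{doublebound} will transform into the volume estimate claimed here.

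\textbf{Step 1 (decay of $\mu$).} First I would record an a priori exponential lower bound for the spherical energy $\int_{S_R}f^2\,d\sigma_R$. Since $0\le\mu<1$ and $H_k>0$, one has $\frac{H_k(s)}{1-\mu(s)}\ge H_k(s)$, so the lower bound of Theorem \ref{doublebound} already gives $\int_{B_R}f^2\,d\mu_g\ge C_1 e^{\int_1^R H_k(s)\,ds}=C_1 A_k(R)/A_k(1)$ (alternatively, \eqref{meanvalue} applied at a point where $f$ does not vanish yields the same kind of exponential growth). Discarding the non-negative term in the divergence identity \eqref{eq:4.1} and using the Rauch bound $H\ge H_k(r)\ge(n-1)\sqrt{|k|}$ (here $k<0$ is essential) gives
$$\int_{S_R}f^2\,d\sigma_R\;\ge\;\int_{B_R}Hf^2\,d\mu_g\;\ge\;(n-1)\sqrt{|k|}\int_{B_R}f^2\,d\mu_g\;\ge\;\mathrm{const}\cdot A_k(R).$$
On the other hand, the numerator of $\mu(R)$ equals $2\int_0^R\!\int_{B_r}|\nabla f|^2\,d\mu_g\,dr\le 2C_0R$, so $\mu(R)=O\!\big(R/A_k(R)\big)=O\!\big(Re^{-(n-1)\sqrt{|k|}R}\big)$; in particular $\mu(R)\to 0$ and $\mu\in L^1([1,\infty))$. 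Continuity of $\mu$ on $[1,\infty)$, together with $\mu<1$ (Remark \ref{lessone}) and $\mu(R)\to0$, then also gives $\mu_\infty:=\sup_{s\ge1}\mu(s)<1$, so that $1/(1-\mu)$ is a bounded function on $[1,\infty)$.

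\textbf{Step 2 (comparison of exponents and conclusion).} Writing $\frac{H_k(s)}{1-\mu(s)}=H_k(s)+H_k(s)\frac{\mu(s)}{1-\mu(s)}$ and using that $H_k$ is bounded on $[1,\infty)$ (it decreases from $H_k(1)$ to $(n-1)\sqrt{|k|}$), the error term satisfies $0\le\int_1^R H_k(s)\frac{\mu(s)}{1-\mu(s)}\,ds\le\frac{H_k(1)}{1-\mu_\infty}\int_1^\infty\mu(s)\,ds=:M_k<\infty$. Hence $A_k(R)/A_k(1)\le e^{\int_1^R\frac{H_k(s)}{1-\mu(s)}ds}\le e^{M_k}A_k(R)/A_k(1)$, and the identical computation with $k'$ in place of $k$ (we only use boundedness of $H_{k'}$ on $[1,\infty)$ and the integrability of the same $\mu$) gives $e^{\int_1^R\frac{H_{k'}(s)}{1-\mu(s)}ds}\le e^{M_{k'}}A_{k'}(R)/A_{k'}(1)$. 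Substituting both into the double-sided estimate of Theorem \ref{doublebound}, and replacing $A_k(R)$ and $A_{k'}(R)$ by the comparable quantities $\text{Vol}_{g_{h}^{k}}(R)$ and $\text{Vol}_{g_{h}^{k'}}(R)$, produces the asserted inequality with suitable positive constants $C$ and $D$.

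The step I expect to be the main obstacle is Step 1: one must extract an exponential lower bound for the \emph{spherical} energy $\int_{S_R}f^2\,d\sigma_R$ rather than merely for the ball energy, since it is precisely this that forces $\mu$ to be summable, and the identity \eqref{eq:4.1} is the device that converts the available ball bound into a sphere bound. Once this is in place the remainder reduces to elementary estimates on $\coth$. Finally, it should be stressed that $k<0$ (rather than $k\le0$) enters essentially here: it is what keeps $H_k$ bounded away from $0$ and makes $A_k(R)$ grow exponentially, hence what makes $R/A_k(R)$ integrable; on the flat borderline the present formulation would have to be recast with Euclidean volumes and the argument would require a sharper control of $\mu$.
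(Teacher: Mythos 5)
Your proposal is correct, and it follows the same overall strategy as the paper: use the hypothesis $\int_{B_R}|\nabla f|^2\,d\mu_g\le C_0$ together with an exponential lower bound on the spherical energy to show $\mu(R)=O\bigl(Re^{-(n-1)\sqrt{|k|}R}\bigr)$, then feed this into Theorem \ref{doublebound}. The differences are in execution, and in both places your route is the more elementary one. For the spherical lower bound the paper invokes the mean-value property of the subharmonic function $f^2$ directly on spheres, whereas you derive $\int_{S_R}f^2\,d\sigma_R\ge \mathrm{const}\cdot\sinh^{n-1}(\sqrt{|k|}R)$ from the divergence identity \eqref{eq:4.1}, Rauch's bound $H\ge(n-1)\sqrt{|k|}$, and the already-established lower Price bound on the ball energy; both are valid, and yours has the advantage of staying entirely inside the machinery of Section \ref{PriceI}. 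For the key step of converting $e^{\int_1^R H_{k'}(s)(1-\mu(s))^{-1}ds}$ into $\mathrm{Vol}_{g_h^{k'}}(R)$, the paper performs an explicit integration by parts of $\coth(\sqrt{|k'|}s)\,u(s)$ followed by a limit comparison with $\sinh^{n-1}(\sqrt{|k'|}R)$, while you simply split $\frac{H_{k'}}{1-\mu}=H_{k'}+H_{k'}\frac{\mu}{1-\mu}$ and observe that the correction is integrable on $[1,\infty)$ because $\mu\in L^1$, $H_{k'}$ is bounded on $[1,\infty)$, and $\sup_{s\ge1}\mu(s)<1$ (which you correctly justify via continuity, $\mu<1$, and $\mu\to0$). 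Your decomposition yields the bounded multiplicative error $e^{M_{k'}}$ in one line and avoids the asymptotic analysis of $u'(R)$ and $(u(R)-1)R$ entirely; it also makes transparent exactly where $k<0$ is used, namely to keep $H_k$ bounded away from zero and $R/A_k(R)$ integrable, a point the paper leaves implicit.
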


\begin{proof}

For $R>1$, Theorem \ref{doublebound} gives us 
$$C_1e^{\int_{1}^{R}\frac{H_{k}(s)}{1-\mu(s)}ds} \leq \int_{B_{R}}f^{2}d\mu_{g} \leq C_2e^{\int_{1}^{R}\frac{H_{k'}(s)}{1-\mu(s)}ds}.$$
We may minimize the lower bound by setting $\mu(s) = 0$. With this assumption, direct computation yields
$$C\text{Vol}_{g_{h}^{k}}(R) \leq \int_{B_{R}}f^{2}d\mu_{g}.$$
This is the desired lower bound. The remainder of the proof focuses on developing the upper bound. Suppose $\int_{B_{R}}|\nabla f|^{2}d\mu_{g} \leq \sigma$ for any $R>0$. Since $f^2$ is non-negative and sub-harmonic, the mean-value property ensures that
\[
\int_{S_{R}}f^2d\sigma_R\geq c\sinh^{n-1}(\sqrt{|k|}R),
\]
for some constant $c>0$. Thus, by Definition \ref{defnmu} we have the following comparison
$$\mu(R) \leq \frac{2\sigma R}{c\sinh^{n-1}(\sqrt{|k|}R)}.$$
For the sake of simplicity, we allow $\sigma$ to absorb other constant coefficients in its term. Then, we have the following estimate:
\begin{equation}
\label{eq:11.3}
 \int_{B_{R}}f^{2}d\mu_{g} \leq C_2e^{(n-1)\sqrt{|k'|}\int_{1}^{R}\frac{\coth(\sqrt{|k'|}s)}{1-\frac{\sigma s}{\sinh^{n-1}(\sqrt{|k|}s)}}ds}
\end{equation}
We aim to integrate $\int_{1}^{R}\frac{\coth(\sqrt{|k'|}s)}{1-\frac{\sigma s}{\sinh^{n-1}(\sqrt{|k|}s)}}ds$ that we conveniently re-write as:
$$\int_{1}^{R}\frac{\coth(\sqrt{|k'|}s)\sinh^{n-1}(\sqrt{|k|}s)}{\sinh^{n-1}(\sqrt{|k|}s)-\sigma s}ds.$$
We proceed by integration by parts, where we shall integrate $\coth$ and derive the rational function that remains
\[
u(s):=\frac{\sinh^{n-1}(\sqrt{|k|}s)}{\sinh^{n-1}(\sqrt{|k|s}) - \sigma s}.
\]
The antiderivative of $\coth(\sqrt{|k'|}s)$ is $\frac{1}{\sqrt{|k'|}}\text{ln}|\sinh(\sqrt{|k'|s})|$. Direct computation yields
$$u'(s) = -\frac{\sigma \sinh^{n-2}(\sqrt{|k|}s)[\sqrt{|k|}(n-1)s\cosh(\sqrt{|k|}s) - \sinh(\sqrt{|k|}s)]}{(\sinh^{n-1}(\sqrt{|k|}s) - \sigma s)^{2}}.$$
Now, we express the integration by parts as:
\begin{align} \label{eq:11.4}
    \int_{1}^{R}\frac{\coth(\sqrt{|k'|}s)\sinh^{n-1}(\sqrt{|k|}s)}{\sinh^{n-1}(\sqrt{|k|}s)-\sigma s}ds &= \left[\frac{1}{\sqrt{|k'|}}\text{ln}|\sinh(\sqrt{|k'|s})|u(s)\right]_{1}^{R} \\ \notag
& - \int_{1}^{R}\coth(\sqrt{|k'|}s)u'(s)ds.
\end{align}
We aim to demonstrate that $- \int_{1}^{R}\coth(\sqrt{|k'|}s)u'(s)ds$ is bounded above. To do this, we primarily focus on $-u'(s)$ (we pull an extra negative to cancel with the one outside the integral). 
Taking the limit as $s \to \infty$, we have
$$\lim_{s \to \infty}u'(s) = \lim_{s \to \infty}\frac{s}{e^{(n-1)\sqrt{|k|}s}} = 0.$$
That is to say, $u'(s)$ goes to zero exponentially. Taken together with the observation that $\coth$ approaches 1, it is easy to conclude that the second term in Equation \eqref{eq:11.4} is bounded. Now, we turn our attention to the other term in Equation \eqref{eq:11.4}:
$$\frac{1}{\sqrt{|k'|}}\ln\left|\sinh\left(\sqrt{|k'|}R\right)\right|u(R).$$
Passing this directly to the power over the exponential as in Equation \eqref{eq:11.3}, we get
$$e^{\frac{(n-1)\sqrt{|k'|}}{\sqrt{|k'|}}\ln|\sinh(\sqrt{|k'|}R)|u(R)} = \sinh^{(n-1)u(R)}(\sqrt{|k'|}R).$$
We perform a limit comparison of this function with volume:
$$\lim_{R\to \infty} \frac{\sinh^{(n-1)u(R)}(\sqrt{|k'|}R)}{\sinh^{n-1}(\sqrt{|k'|}R)} = \lim_{R \to \infty}\frac{e^{\sqrt{|k'|}(n-1)u(R)R}}{e^{\sqrt{|k'|}(n-1)R}} = \lim_{R \to \infty}e^{\sqrt{|k'|}(n-1)(u(R) - 1)R}.$$
It is clear that $\lim_{R \to \infty}u(R) = 1$. So, $\lim_{R \to \infty}(u(R) - 1)R = 0 \cdot \infty$. 
We apply de l'H\^opital's rule:
$$\lim_{R \to \infty}(u(R) - 1)R = \lim_{R \to \infty}\frac{(u(R) - 1)}{\left(\frac{1}{R}\right)} = \lim_{R\to \infty}-R^{2}u'(R).$$
We know that $u'(R) = O(Re^{-(n-1)\sqrt{|k'|}R})$. So, $\lim_{R \to \infty}(u(R) - 1)R = 0$. Hence, we have that the function 
\[
e^{(n-1)\sqrt{|k'|}\int_{1}^{R}\frac{\coth(\sqrt{|k'|}s)}{1-\frac{\sigma s}{\sinh^{n-1}(\sqrt{|k|}s)}}ds}
\]
grows like $\sinh^{n-1}(\sqrt{|k'|}R)$. This is exactly $\text{Vol}_{g_{h}^{k'}}(R)$ up to constant. The proof is complete.
\end{proof}

\section{Estimates for Positive Harmonic Functions}\label{PH}

In this section, we prove that the hyperbolic Poisson kernel is, in a sense, the positive hyperbolic harmonic function with the fastest growing $L^2$-norm over geodesic balls. This gives us an upper bound on the growth of \emph{all} positive hyperbolic harmonic functions. We use this fact to prove a similar result for positive harmonic functions on simply connected negatively curved Riemannian manifolds with pinched sectional curvature. We start with the following result for positive harmonic function on the real-hyperbolic space.

\begin{proposition} \label{positiveh}
 Let $(\mathbb{H}^n_{\IR},g^{-1}_h)$ be $n$-dimensional hyperbolic space with sectional curvature equal to $-1$. Given a  positive harmonic function $f: \mathbb{H}^n_{\IR} \to \IR$, we have the following bounds:
$$C_2e^{(n-1)R} \leq \int_{B_{R}}f^{2}d\mu_{g} \leq D_2e^{2(n-1)R},$$
for some positive constants $C_2$, $D_2$ (depending on $f$).
\end{proposition}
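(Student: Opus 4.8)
The plan is to use the double-sided Price inequality from Theorem~\ref{doublebound} specialized to $\mathbb{H}^n_{\IR}$, where $k' = k = -1$, so that both exponents collapse to the single quantity $(n-1)\int_1^R \frac{\coth s}{1-\mu(s)}\,ds$. The lower bound $C_2 e^{(n-1)R} \leq \int_{B_R} f^2\,d\mu_g$ is then immediate from the left half of Theorem~\ref{doublebound}, since $\mu \geq 0$ forces $\frac{\coth s}{1-\mu(s)} \geq \coth s$, and $\int_1^R \coth s\,ds = \ln\sinh R - \ln\sinh 1 = (n-1)^{-1}\cdot(n-1)R + O(1)$ after multiplying by $n-1$; more precisely $e^{(n-1)\int_1^R \coth s\,ds} = \bigl(\sinh R / \sinh 1\bigr)^{n-1} \asymp e^{(n-1)R}$. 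So the real content is the upper bound.

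For the upper bound the key point, as the section's opening sentence advertises, is that among positive harmonic functions on $\mathbb{H}^n_{\IR}$ the Poisson kernel is extremal for $L^2$-growth on balls. First I would invoke the Poisson/Herglotz representation: every positive harmonic function $f$ on $\mathbb{H}^n_{\IR}$ can be written as $f(x) = \int_{\partial \mathbb{H}^n_{\IR}} P(x,\xi)\,d\nu(\xi)$ for a finite positive Borel measure $\nu$ on the boundary sphere, where $P(x,\xi)$ is the hyperbolic Poisson kernel (equal to $e^{(n-1)\langle x,\xi\rangle_{\mathrm{Bus}}}$ up to normalization, $\langle\,\cdot\,,\cdot\rangle_{\mathrm{Bus}}$ the Busemann function). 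Working in geodesic polar coordinates centered at a basepoint $p$ (WLOG the center of the ball, or a fixed reference point — one absorbs the discrepancy into constants), one has the pointwise bound $P(x,\xi) \leq C e^{(n-1) r(x)}$ where $r(x) = d(p,x)$, because the Busemann function satisfies $|\langle x,\xi\rangle_{\mathrm{Bus}}| \leq r(x)$. Hence $f(x) \leq C\,\nu(\partial\mathbb{H}^n_{\IR})\,e^{(n-1)r(x)}$, i.e. $f(x)^2 \leq C' e^{2(n-1)r(x)}$. Integrating over $B_R$ against $d\mu_g = \sinh^{n-1}(r)\,dr\,d\theta$ gives
$$\int_{B_R} f^2\,d\mu_g \leq C' \,\omega_{n-1}\int_0^R e^{2(n-1)r}\sinh^{n-1}(r)\,dr \leq D_2 e^{2(n-1)R},$$
since the integrand is $O(e^{(3(n-1))r})$... wait — one must be careful: $e^{2(n-1)r}\sinh^{n-1}r \sim 2^{-(n-1)} e^{(3n-3)r}$, which would give $e^{(3n-3)R}$, \emph{too weak}. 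So a cruder bound on $P$ is not enough; the correct estimate must exploit that $P(x,\xi)$ is small away from the direction $\xi$. The sharp statement is that $\int_{\partial} P(x,\xi)\,d\xi$ (spherical average in $x$ over $S_R(p)$) grows like $e^{(n-1)R}$, not $e^{2(n-1)R}$ pointwise-squared — and this is precisely the content flagged as being carried out in Appendix~\ref{Hayden} via hypergeometric functions.

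So the honest route is: (i) reduce to $f = $ single Poisson kernel by the representation theorem together with the inequality $\bigl(\int P\,d\nu\bigr)^2 \le \nu(\partial)\int P^2\,d\nu$ (Cauchy--Schwarz) and Fubini, so that $\int_{S_R} f^2 \le \nu(\partial)\int_{\partial}\bigl(\int_{S_R} P(\cdot,\xi)^2\,d\sigma_R\bigr)d\nu(\xi)$; (ii) by homogeneity/isometry the inner integral $\int_{S_R(p)} P(x,\xi)^2\,d\sigma_R(x)$ is independent of $\xi$, so it suffices to bound $g(R) := \int_{S_R} P_0^2\,d\sigma_R$ for one fixed Poisson kernel $P_0$; (iii) compute or estimate $g(R)$ — this is the integration of (a power of) the hyperbolic Poisson kernel over a geodesic sphere done in Appendix~\ref{Hayden}, and the outcome is $g(R) = O(e^{2(n-1)R})$ (the square of the kernel concentrates near one boundary point, killing one factor of the volume growth); (iv) integrate $g(r)$ from $0$ to $R$ and conclude $\int_{B_R} f^2\,d\mu_g = \int_0^R g(r)\,dr \le D_2 e^{2(n-1)R}$, absorbing $\nu(\partial)^2$ and other constants into $D_2$. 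The main obstacle is step (iii): getting the \emph{correct} exponent $2(n-1)$ rather than the naive $3(n-1)$ requires the explicit hypergeometric evaluation (or at least a careful saddle-point/Laplace estimate of $\int_0^\pi (\cosh R - \sinh R\cos\theta)^{-2(n-1)}\sin^{n-2}\theta\,d\theta$, using $1/P_0 \asymp \cosh R - \sinh R\cos\theta$ where $\theta$ is the angle to $\xi$). Everything else is bookkeeping with constants and the standard Herglotz representation of positive harmonic functions on rank-one symmetric spaces.
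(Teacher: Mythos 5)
Your proof is correct, but your upper bound travels a genuinely different road from the paper's. You reduce to the Poisson kernel via the Herglotz/Martin representation $f=\int_{\partial}P(\cdot,\xi)\,d\nu(\xi)$, Cauchy--Schwarz, Fubini, and the rotational symmetry that makes $\int_{S_R}P(\cdot,\xi)^2\,d\sigma_R$ independent of $\xi$; the paper instead never represents $f$ at all. It differentiates the divergence identity \eqref{eq:4.1}, applies the Cheng--Yau gradient estimate $|\nabla f|^2\le (n-1)^2f^2$ to get the integro-differential inequality \eqref{eq1} for $y(R)=\int_{S_R}f^2\,d\sigma_R$, observes that the Poisson kernel saturates the gradient estimate so that $Q(R)=\int_{S_R}P_{-1}^2\,d\sigma_R$ satisfies the corresponding \emph{equality} \eqref{eq2}, and then runs a first-crossing comparison argument to conclude $y\le c^{-1}Q$. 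Both proofs ultimately lean on the same Appendix~\ref{Hayden} computation that $Q(R)=O(e^{2(n-1)R})$, and your self-correction (discarding the naive pointwise bound $P\le Ce^{(n-1)r}$, which only yields $e^{3(n-1)R}$, in favor of the spherical $L^2$ integral of $P^2$) is exactly the right diagnosis. The trade-off: your route uses heavier input (the representation theorem for positive harmonic functions, special to the symmetric space) but is arguably more transparent about \emph{why} the Poisson kernel is extremal; the paper's ODE-comparison route uses only the Cheng--Yau estimate and so transplants directly to variable pinched curvature, which is precisely how Theorem~\ref{PHmain} is then proved. Two small points to tidy: the case of constant $f$ (excluded from Theorem~\ref{doublebound}) should be dispatched trivially for the lower bound, and one should fix the normalization $P(p,\xi)=1$ at the center $p$ of the balls so that the rotational-invariance step and the finiteness $\nu(\partial)=f(p)$ are literally correct.
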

\begin{proof}
The lower bounds follows directly from Theorem \ref{doublebound} and the fact that the volume of geodesic balls in the $n$-hyperbolic space with sectional curvature $-1$ grows like $e^{(n-1)R}$. For the upper bound, we begin by comparing the $L^2$-norm of a positive harmonic function $f$ with the $L^{2}$-norm of the hyperbolic Poisson kernel $P_{-1}$. Recall that $P_{-1}$ is a positive harmonic function satisfying the pointwise equality
\[
|\nabla P_{-1}|^2=(n-1)^2 P_{-1}^2.
\]
Our starting point is the divergence identity given in Equation \eqref{eq:4.1}:
$$\int_{S_{R}}f^{2}d\sigma_{R} = \int_{B_{R}}H_{-1}(r)f^{2}d\mu_{g^{-1}_h} + \int_{B_{R}}\partial_{r}f^2 d\mu_{g^{-1}_h},$$
and its equivalent formulation in terms of Green's identity:
$$\int_{S_{R}}f^{2}d\sigma_{R} = \int_{B_{R}}H_{-1}(r)f^{2}d\mu_{g^{-1}_h} + 2\int_{0}^{R}\int_{B_{r}}|\nabla f|^{2} d\mu_{g^{-1}_h}dr.$$
We take one derivative:
$$\frac{\partial}{\partial R}\int_{S_{R}}f^{2}d\sigma_{R} = H_{-1}(R)\int_{S_{R}}f^{2}d\sigma_{R} + 2\int_{B_{R}}|\nabla f|^{2} d\mu_{g^{-1}_h}.$$
Now, we apply  to $f$ the Cheng-Yau gradient estimate (\cite[Theorem 1.1]{PLi} or \cite[Chapter I, \S 3]{SchoenY}) for positive harmonic functions on spaces with Ricci curvature bounded below by $-(n-1)$:
\[
|\nabla f|^2\leq (n-1)^2 f^2.
\]
This pointwise inequality implies:
$$\frac{\partial}{\partial R}\int_{S_{R}}f^{2}d\sigma_{R} \leq H_{-1}(R)\int_{S_{R}}f^{2}d\sigma_{R} + 2(n-1)^{2}\int_{B_{R}}f^{2} d\mu_{g^{-1}_h}.$$
Thus, we adopt the notation $y(R) = \int_{S_{R}}f^{2}d\sigma_{R}$, and we obtain the following differential inequality:
\begin{equation}
\label{eq1}
y'(R) \leq H_{-1}(R)y(R) + 2(n-1)^{2}\int_{0}^{R}y(r) dr.
\end{equation}
We apply the same steps for $P_{-1}$, noting that Li-Yau's gradient estimate is now saturated by $P_{-1}$.  By adopting the notation $Q(R) = \int_{S_{R}}P_{-1}^{2}d\sigma_{R}$, we obtain:
\begin{equation}
\label{eq2}
Q'(R) = H_{-1}(R)Q(R) + 2(n-1)^{2}\int_{0}^{R}Q(r) dr.
\end{equation}
Now, for some small constant $c>0$, there exists $\varepsilon > 0$ such that $Q(R) - cy(R) > 0$ for any $R\in [0, \varepsilon)$. We want to extend this result to all $R \in (0,\infty)$. We begin by subtracting Equation \eqref{eq1} scaled by $c$ from Equation \eqref{eq2}:
\begin{equation}\label{dineq}
(Q(R) - cy(R))' \geq H_{-1}(R)[Q(R) - cy(R)] + 2(n-1)^{2}\int_{0}^{R}[Q(r) - cy(r)] dr.
\end{equation}
Suppose for the sake of contradiction that for some $R_{0} > \varepsilon$, $Q(R_{0}) < cy(R_{0})$. Let $R_{0}$ be the first value where this happens. Since $Q(0)-cy(0)>0$,  the derivative $(Q(R) - cy(R))'$ has to turn negative at some previous time.  On the other hand, this is impossible because of Equation \eqref{dineq}. We conclude that $(Q(R) - cy(R))'\geq 0$ for any $R>0$. Thus, $y(R)$ is bounded above (up to a positive constant) by $Q(R)$. In Appendix \ref{Hayden}, we compute the integral of the hyperbolic Poisson kernel over a geodesic sphere in terms of hypergeometric functions. Among other things, this exact integration implies the bound $e^{2(n-1)R}$ stated in the theorem. For more details, see Theorem \ref{hypergeometric} in Appendix \ref{Hayden}.
\end{proof}

Using the Price inequality and the ideas developed in Proposition \ref{positiveh}, we may finally provide bounds on the asymptotic growth of the $L^{2}$-norm of positive harmonic functions on simply connected Riemannian manifolds with pinched negative sectional curvature.

\begin{theorem}\label{PHmain}
Let $(M^{n},g)$ be complete and simply connected with sectional curvatures $k' \leq \text{sec}_{g} \leq k < 0$. Let $f: M \to \IR$ be a positive harmonic functions. Then, we have the following bounds:
$$B_1e^{\sqrt{|k|}(n-1)R} \leq \int_{B_{R}}f^{2}d\mu_{g} \leq B_2e^{2\sqrt{|k'|}(n-1)R},$$
for some positive constants $B_1$ and $B_2$.
\end{theorem}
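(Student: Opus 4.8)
The plan is to derive the lower bound directly from Theorem \ref{doublebound} and to obtain the upper bound by transplanting the Poisson-kernel comparison of Proposition \ref{positiveh} to the pinched setting. (If $f$ is a positive constant, both inequalities follow at once from the G\"unther and Bishop volume comparisons, so we may assume $f$ non-constant.) For the lower bound, Remark \ref{lessone} gives $0\le\mu(s)<1$, hence $1/(1-\mu(s))\ge1$, and the lower Price inequality of Theorem \ref{doublebound} yields
\[
\int_{B_R}f^{2}\,d\mu_g\ \ge\ C_1\,e^{\int_1^R H_k(s)\,ds}.
\]
Since $H_k(s)=(n-1)\sqrt{|k|}\coth(\sqrt{|k|}s)$, the exponent equals $(n-1)\bigl[\ln\sinh(\sqrt{|k|}R)-\ln\sinh(\sqrt{|k|})\bigr]$, so the right-hand side is a positive multiple of $\sinh^{\,n-1}(\sqrt{|k|}R)$, which grows like $e^{\sqrt{|k|}(n-1)R}$; this is the claimed lower bound.

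For the upper bound I would first record the two comparison inputs. Because $\text{sec}_g\ge k'$ we have $\mathrm{Ric}_g\ge(n-1)k'$, so the Cheng--Yau/Li--Yau gradient estimate, in the scaled form used in Proposition \ref{positiveh}, gives $|\nabla f|^{2}\le(n-1)^{2}|k'|\,f^{2}$ pointwise; and Rauch's comparison theorem bounds the mean curvature of geodesic spheres from above by its model value, $H(r,\theta)\le H_{k'}(r)=(n-1)\sqrt{|k'|}\coth(\sqrt{|k'|}r)$ on each $S_r(p)$. Setting $y(R)=\int_{S_R}f^{2}\,d\sigma_R$ and starting from the Green-identity form of Equation \eqref{eq:4.1},
\[
y(R)=\int_{B_R}H\,f^{2}\,d\mu_g+2\int_0^R\!\int_{B_r}|\nabla f|^{2}\,d\mu_g\,dr,
\]
differentiating in $R$ and using $\tfrac{d}{dR}\int_{B_R}Hf^{2}\,d\mu_g=\int_{S_R}Hf^{2}\,d\sigma_R\le H_{k'}(R)\,y(R)$ together with $\int_{B_R}|\nabla f|^{2}\,d\mu_g\le(n-1)^{2}|k'|\int_0^R y(r)\,dr$, I obtain the integro-differential inequality
\[
y'(R)\ \le\ H_{k'}(R)\,y(R)+2(n-1)^{2}|k'|\int_0^R y(r)\,dr.
\]

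Next I would introduce the hyperbolic Poisson kernel $P_{k'}$ on the constant-curvature model $\mathbb{H}^n_{k'}$, which saturates the gradient estimate, $|\nabla P_{k'}|^{2}=(n-1)^{2}|k'|\,P_{k'}^{2}$, so that $Q(R):=\int_{S_R}P_{k'}^{2}\,d\sigma_R$ satisfies the analogous relation with equality. Choosing $c>0$ so small that $Q(R)-c\,y(R)>0$ on an initial interval $[0,\varepsilon)$, the difference obeys
\[
\bigl(Q-c\,y\bigr)'(R)\ \ge\ H_{k'}(R)\bigl(Q-c\,y\bigr)(R)+2(n-1)^{2}|k'|\int_0^R\bigl(Q-c\,y\bigr)(r)\,dr,
\]
and the ``first time of failure'' argument of Proposition \ref{positiveh} shows $Q-c\,y$ can never become negative, so $y(R)\le c^{-1}Q(R)$ for all $R>0$. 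Integrating, $\int_{B_R}f^{2}\,d\mu_g=\int_0^R y(r)\,dr\le c^{-1}\int_{B_R}P_{k'}^{2}\,d\mu_{g_h^{k'}}$, and the explicit integration of the Poisson kernel performed in Appendix \ref{Hayden} (Theorem \ref{hypergeometric}), rescaled from curvature $-1$ to curvature $k'$ exactly as in Proposition \ref{positiveh}, shows this grows like $e^{2\sqrt{|k'|}(n-1)R}$, which gives the upper bound.

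The delicate point is the comparison step: one must make rigorous the claim that $Q-c\,y$ cannot turn negative, i.e.\ argue that at a putative first zero $R_0>\varepsilon$ the integral $\int_0^{R_0}(Q-c\,y)$ is still nonnegative, forcing $(Q-c\,y)'(R_0)\ge0$ and contradicting the sign change. Secondary technical checks are that Rauch's estimate for $H$ is being used in the favorable direction and that differentiation of Equation \eqref{eq:4.1} under the integral sign is legitimate near $r=0$, where the field $f^{2}\partial_r$ is bounded.
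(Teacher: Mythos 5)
Your proposal is correct and follows essentially the same route as the paper: the lower bound from the double-sided Price inequality with $\mu\ge 0$ (equivalently, hyperbolic volume growth in curvature $k$), and the upper bound via the Cheng--Yau gradient estimate, Rauch's bound $H\le H_{k'}$, the resulting integro-differential inequality for $y(R)$, the comparison with $Q(R)=\int_{S_R}P_{k'}^2\,d\sigma_R$ by the first-time-of-failure argument, and the growth $e^{2\sqrt{|k'|}(n-1)R}$ of $Q$ from Appendix \ref{Hayden}. The technical caveats you flag (the sign of $\int_0^{R_0}(Q-cy)$ at a putative first zero, the direction of Rauch's estimate) are the same ones implicit in the paper's own argument, and your handling of the constant case is a harmless addition.
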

\begin{proof}
The lower bounds follows directly from Theorem \ref{doublebound} and the fact that the volume of geodesic balls in the $n$-hyperbolic space with sectional curvature $k$ grows like $e^{|k|(n-1)R}$. Therefore, it remains to provide the upper bound. For this, we argue as in Proposition \ref{positiveh}. We begin by taking a derivative of the usual divergence equality:
$$\frac{\partial}{\partial R}\int_{S_{R}}f^{2}d\sigma_{R} = \int_{S_{R}}H(R, \theta)f^{2}d\sigma_{R} + 2\int_{B_{R}}|\nabla f|^{2} d\mu_{g},$$
where
\[
H(R, \theta): S_R\longrightarrow \IR
\]
is the (variable) mean-curvature of the geodesic sphere $S_R$ in $(M^n, g)$. Next, we observe that
\[
k' \leq \text{sec}_{g} \leq k < 0 \quad \Rightarrow \quad Ric_g\geq -(n-1)|k^\prime|g.
\]
We the apply  to $f$ the Cheng-Yau gradient estimate (\cite[Theorem 1.1]{PLi} or \cite[Chapter I, \S 3]{SchoenY}) for positive harmonic functions on spaces with Ricci curvature bounded below by $-|k^\prime|(n-1)$:
\[
|\nabla f|^2\leq (n-1)^2 |k^\prime|f^2.
\]
This pointwise inequality when combined with Rauch's estimate
\[
H_{k^\prime}(R)\geq  H(R, \theta)
\]
implies:
$$\frac{\partial}{\partial R}\int_{S_{R}}f^{2}d\sigma_{R} \leq H_{k^\prime}(R)\int_{S_{R}}f^{2}d\sigma_{R} + 2(n-1)^{2}|k^\prime|\int_{B_{R}}f^{2} d\mu_{g}.$$
Thus, we adopt the notation $y(R) = \int_{S_{R}}f^{2}d\sigma_{R}$, and we obtain the following differential inequality:
\begin{equation}
\label{eq:17.6}
y'(R) \leq H_{k^\prime}(R)y(R) + 2(n-1)^{2}|k^\prime|\int_{0}^{R}y(r) dr.
\end{equation}
We denote by $P_{k^\prime}$ a Poisson kernel in real-hyperbolic space with sectional curvature $k^\prime$. This is a positive harmonic function satisfying the pointwise equality:
\[
|\nabla P_{k^\prime}|^2=(n-1)^2|k^\prime| P_{k^\prime}^2.
\]
By adopting the notation $Q(R) = \int_{S_{R}}P^{2}_{k^\prime}d\sigma_{R}$, we obtain:
\begin{equation}
\label{eq:17.7}
Q'(R) = H_{k^\prime}(R)Q(R) + 2(n-1)^{2}|k^\prime|\int_{0}^{R}Q(r) dr.
\end{equation}
Now, for some small constant $c>0$, there exists $\varepsilon > 0$ such that $Q(R) - cy(R) > 0$ for any $R\in [0, \varepsilon)$. We want to extend this result to all $R \in (0,\infty)$. We begin by subtracting Equation \eqref{eq:17.6} scaled by $c$ from Equation \eqref{eq:17.7}:
\begin{equation}\label{diffineq}
(Q(R) - cy(R))' \geq H_{k^\prime}(R)[Q(R) - cy(R)] + 2(n-1)^{2}|k^\prime|\int_{0}^{R}[Q(r) - cy(r)] dr.
\end{equation}
Arguing as in Proposition \ref{positiveh}, we have that that $y(R)$ is bounded above (up to a constant) by $Q(R)$. Now, with the sectional curvature normalized to be $-k^\prime$, $Q(R)$ grows like $e^{2\sqrt{|k'|}(n-1)R}$, see again Appendix \ref{Hayden}. This gives the upper bound in the theorem and the proof is complete.
\end{proof}

\appendix

\section{Appendix A: $\mu$-Function on Euclidean Space}\label{Aaron}

In this appendix, we study the $\mu$-function (see Definition \ref{defnmu}) on Euclidean space and highlight its similarities with Almgren's frequency function.

Recall that any harmonic function on $\IR^n$ decomposes as a series of homogeneous harmonic polynomials, see for example \cite[Corollary 5.34]{Axler}. The $\mu$-function is constant on spherical harmonics.
\begin{lemma}
Let $v_d$ be a homogeneous harmonic polynomial of degree $d$ on $\IR^n$. We have
\[
\mu_{v_d}(R)=\mu_d:=\frac{2d}{2d+n-1},
\]
for any $R\geq 0$.
\end{lemma}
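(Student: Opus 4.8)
The plan is to compute both the numerator and denominator in Definition \ref{defnmu} directly for $f = v_d$, using the homogeneity of $v_d$ and an explicit parametrization of $B_R$ and $S_R$ by the unit ball and unit sphere. First I would write $x = r\omega$ with $r \in [0,R]$ and $\omega \in S^{n-1}$, so that $v_d(r\omega) = r^d v_d(\omega)$, and hence $|\nabla v_d(r\omega)|^2 = r^{2d-2}|\nabla v_d(\omega)|^2$ (since $\nabla v_d$ is a vector of homogeneous polynomials of degree $d-1$). This immediately gives
\[
\int_{B_r}|\nabla v_d|^2\,d\mu_g = \left(\int_0^r \rho^{2d-2}\rho^{n-1}\,d\rho\right)\left(\int_{S^{n-1}}|\nabla v_d(\omega)|^2\,d\sigma\right) = \frac{r^{2d+n-2}}{2d+n-2}\,I,
\]
where $I := \int_{S^{n-1}}|\nabla v_d(\omega)|^2\,d\sigma$. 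Then the numerator of $\mu_{v_d}(R)$ is
\[
2\int_0^R \int_{B_r}|\nabla v_d|^2\,d\mu_g\,dr = \frac{2I}{2d+n-2}\int_0^R r^{2d+n-2}\,dr = \frac{2I}{(2d+n-2)(2d+n-1)}\,R^{2d+n-1}.
\]

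Next I would compute the denominator. On $S_R$, $v_d(R\omega)^2 = R^{2d}v_d(\omega)^2$ and the surface measure scales as $d\sigma_R = R^{n-1}\,d\sigma$, so
\[
\int_{S_R}v_d^2\,d\sigma_R = R^{2d+n-1}\int_{S^{n-1}}v_d(\omega)^2\,d\sigma =: R^{2d+n-1}\,J.
\]
Dividing, the $R^{2d+n-1}$ factors cancel (confirming $\mu_{v_d}$ is constant in $R$), leaving
\[
\mu_{v_d}(R) = \frac{2I}{(2d+n-2)(2d+n-1)\,J}.
\]
It then remains to show $I = d(2d+n-2)\,J$, which would yield $\mu_{v_d}(R) = \frac{2d}{2d+n-1}$ as claimed. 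This identity is the one genuine computation: I would obtain it from the divergence theorem / Green's identity on $B_1$. Since $v_d$ is harmonic, $\int_{B_1}|\nabla v_d|^2 = \int_{S^{n-1}} v_d\,\partial_r v_d\,d\sigma = \int_{S^{n-1}} v_d\cdot d\,v_d\,d\sigma = d\,J$ (using Euler's relation $\partial_r v_d = d\,v_d$ on the unit sphere). On the other hand, by the same radial-integration as above, $\int_{B_1}|\nabla v_d|^2 = \frac{I}{2d+n-2}$. Equating gives $I = d(2d+n-2)\,J$, as needed.

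The main obstacle is really just organizational: making sure the scaling exponents and the constant $I/J$ are tracked correctly, and being careful that $J = \int_{S^{n-1}}v_d(\omega)^2\,d\sigma \neq 0$ so the ratio is well-defined (this holds because $v_d$ is a nonzero polynomial, hence nonzero on a set of positive measure on $S^{n-1}$ — and the $d=0$ case is trivial since then $v_0$ is constant and $\mu_{v_0}\equiv 0 = \frac{0}{n-1}$). I should also note that $\mu_{v_d}(R)$ is literally independent of $R$ even at $R=0$ in the limiting sense, since the ratio of the two $R^{2d+n-1}$-homogeneous quantities is constant; the statement "for any $R\geq 0$" is then just a matter of interpreting the ratio at $R=0$ as this common constant. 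No delicate estimates are needed — everything reduces to homogeneity plus one application of Green's identity.
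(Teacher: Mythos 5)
Your proposal is correct and follows essentially the same route as the paper: both arguments reduce everything to the homogeneity/scaling of $v_d$ together with Green's identity and Euler's relation $r\,\partial_r v_d = d\,v_d$, the only cosmetic difference being that you compute $\int_{B_R}|\nabla v_d|^2\,d\mu$ by radial integration of the constant $I$ and then fix $I/J$ via Green's identity on $B_1$, whereas the paper gets $\int_{B_R}|\nabla v_d|^2\,d\mu = d\,\epsilon_d R^{2d+n-2}$ in one step. Your bookkeeping of the exponents and the nondegeneracy remark $J\neq 0$ are both fine.
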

\begin{proof}
A direct computation yields
\[
\int_{S_R}v^2_d d\sigma=\epsilon_d R^{2d+n-1}
\]
where $\epsilon_d:=\int_{S_1}v^2_d d\sigma$. Similarly, one has
\[
\int_{B_R} |\nabla v_d|^2d\mu = d\epsilon_d R^{2d+n-2}.
\]
Therefore, we have
\[
\mu_{v_d}(R)=\frac{2\int^R_0\int_{B_r}|\nabla v_d|^2d\mu dr}{\int_{S_R}v^2_d d\sigma}=\frac{2d\epsilon_d\int^R_0 r^{2d+n-2}dr}{\epsilon_d R^{2d+n-1}}=\frac{2d}{2d+n-1}=\mu_d.
\]
\end{proof}

We can now state our main \textit{monotonicity} result.

\begin{theorem}\label{MM}
If $u:\IR^n\to\IR$ be harmonic, then $\mu_{u}(R)$ is strictly monotonic increasing unless $u$ is a homogeneous harmonic polynomial.  
\end{theorem}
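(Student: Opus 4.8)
The plan is to write a harmonic $u:\IR^n\to\IR$ as its expansion in homogeneous harmonic polynomials, $u=\sum_{d\ge 0} v_d$, and to express $\mu_u(R)$ as a weighted average of the constants $\mu_d=\tfrac{2d}{2d+n-1}$ computed in the preceding lemma, with $R$-dependent weights. Concretely, set $a_d(R):=\int_{S_R}v_d^2\,d\sigma=\epsilon_d R^{2d+n-1}$; by orthogonality of spherical harmonics of different degrees on each sphere, $\int_{S_R}u^2\,d\sigma=\sum_d a_d(R)$, and likewise $2\int_0^R\!\int_{B_r}|\nabla u|^2\,d\mu\,dr=\sum_d \mu_d\,a_d(R)$ (each term being the numerator for $v_d$, again by cross-term cancellation). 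Hence
\[
\mu_u(R)=\frac{\sum_d \mu_d\,\epsilon_d R^{2d+n-1}}{\sum_d \epsilon_d R^{2d+n-1}}=\frac{\sum_d \mu_d\,\epsilon_d R^{2d}}{\sum_d \epsilon_d R^{2d}},
\]
a convex combination of the increasing sequence $(\mu_d)$ with weights $w_d(R)=\epsilon_d R^{2d}/\sum_j \epsilon_j R^{2j}$.

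The next step is to show that this weighted average is strictly increasing in $R$ unless only one $d$ appears (i.e. unless $u=v_d$). Differentiating, $\mu_u'(R)$ has the sign of $\big(\sum_d 2d\,\mu_d\epsilon_d R^{2d-1}\big)\big(\sum_j \epsilon_j R^{2j}\big)-\big(\sum_d \mu_d\epsilon_d R^{2d}\big)\big(\sum_j 2j\,\epsilon_j R^{2j-1}\big)$, which after collecting terms equals $\tfrac1R\sum_{d,j}(2d-2j)\mu_d\,\epsilon_d\epsilon_j R^{2d+2j}$; symmetrizing over the pair $(d,j)$ turns this into $\tfrac1R\sum_{d<j}(2d-2j)(\mu_d-\mu_j)\epsilon_d\epsilon_j R^{2d+2j}\cdot(-1)$, i.e. a sum of terms $(2j-2d)(\mu_j-\mu_d)\epsilon_d\epsilon_j R^{2d+2j}/R\ge 0$, each strictly positive when both $\epsilon_d,\epsilon_j>0$. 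This is exactly the statement that the mean of an increasing sequence against a one-parameter family of weights whose "mass" shifts monotonically toward higher indices is monotone (a Chebyshev-type correlation inequality), and it is strict precisely when at least two degrees are genuinely present. It also covers $u$ constant (only $d=0$) as the degenerate non-strict case, consistent with Remark \ref{lessone}.

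There is one analytic point to be careful about, and I expect it to be the main obstacle: the harmonic expansion $u=\sum_d v_d$ converges locally uniformly with all derivatives on $\IR^n$ (standard for harmonic functions, e.g.\ via the Poisson/Gegenbauer expansion), so the term-by-term integration of $u^2$ and $|\nabla u|^2$ over $B_R$ and $S_R$ is justified on each fixed ball; but the sums are infinite, so before differentiating in $R$ one must know the series $\sum_d \epsilon_d R^{2d}$ and its formally differentiated series converge locally uniformly in $R$. This follows because $\epsilon_d R_0^{2d}\le \int_{S_{R_0}}u^2\,d\sigma<\infty$ gives a uniform bound on the coefficients on any $[0,R_0]$, so on $[0,R_0)$ the power series in $R^2$ has radius of convergence $\ge R_0$ and may be differentiated freely. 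Once this is in place the computation above goes through verbatim, and the equivalence "$\mu_u'(R)\equiv 0$ $\iff$ at most one $\epsilon_d\neq 0$ $\iff$ $u$ is a homogeneous harmonic polynomial" finishes the proof.
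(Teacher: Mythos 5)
Your proof is correct and follows essentially the same route as the paper: decompose $u$ into homogeneous harmonic polynomials, use orthogonality on spheres to write $\mu_u(R)$ as a weighted average of the constants $\mu_d$, and symmetrize the derivative into a sum of manifestly positive Chebyshev-type terms, with strictness exactly when at least two degrees occur. The only difference is that you carry out explicitly the ``somewhat tedious computation'' (and the term-by-term convergence justification) that the paper delegates to \cite{Aaron}.
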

\begin{proof}
First, we decompose $u$ as a series of harmonic homogeneous polynomials
\[
u=\sum_{j} v_{d_j}.
\]
This series decomposition converges absolutely and uniformly on compact subsets of $\IR^n$. Now, a somewhat tedious computation yields:
\[
\mu^{\prime}(R)=\frac{\sum^{\infty}_{j=1}\sum_{k>j}2(d_{k}-d_{j})(\mu_{d_k}-\mu_{d_j})\epsilon_{d_k}\epsilon_{d_j}R^{2(d_k+d_j-2d_1)-1}}{\sum^{\infty}_{j=1}\sum_{k>j}2\epsilon_{d_j}\epsilon_{d_k}R^{2(d_j+d_k-2d_1)}+\sum^{\infty}_{j=1}\epsilon^2_{d_j}R^{4(d_j-d_1)}}
\]
As $d_j<d_k$ for $j<k$, we have $\mu_{d_j}<\mu_{d_k}$. Thus, as long as $u$ is not a homogeneous harmonic polynomial, we have $\mu^{\prime}(R)>0$. For more analytical details, we refer to the PhD thesis of the third named author \cite[Theorem 3.6.1]{Aaron}.
\end{proof}
 
Now, we compare the behavior of $\mu$ with Almegren's frequency function $U$. Recall, that given a Euclidean harmonic function $u:\IR^n\to\IR$, Almgren defined the useful function
\[
U(R)=\frac{R\int_{B_R}|\nabla u|^2d\mu}{\int_{S_{R}}u^2d\sigma}.
\]
It is well-known that  $U$ is strictly monotonic increasing unless $u$ is a homogeneous harmonic polynomial. In this case, $U$ is constant and equals the degree of the homogeneous harmonic polynomial. For these results, see for example \cite[Lemma 6.3 and Lemma 6.4]{Colding}. Thus, the $\mu$-function exhibits analogous monotonicity behavior as to Almgren's function, \textit{cf}. Theorem \ref{MM}. The $\mu$-function has the appearance of a ``dampened'' Almgren's function as it is always bounded between zero and one. Moreover, one can prove that $U$ is unbounded if and only if $u$ has infinitely many harmonic homogeneous polynomials appearing in its series decomposition. Interestingly, this is the case if and only if the $\lim_{R\to\infty}\mu(R)=1$. We refer to \cite[Chapter 3]{Aaron} for an extensive analytical and numerical comparison between $\mu$ and $U$.
\section{Appendix B: On the Integral of the Hyperbolic Poisson Kernel over a Geodesic Sphere}\label{Hayden}
In this appendix, we estimate the integral of the hyperbolic Poisson kernel $P_{-1}$ on a geodesic sphere in real-hyperbolic space $(\mathbb{H}^n_{\IR},g^{-1}_h)$.
\begin{proposition}\label{hypergeometric}
There exists a constant $c>0$ such that
\[
\int_{S_{R}}P^2_{-1}d\sigma_{R} \leq c \sinh^{n-1}(R)e^{(n-1)R}.
\]
\end{proposition}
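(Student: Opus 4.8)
The plan is to evaluate the integral directly in the ball model of $\mathbb{H}^n_{\IR}$: reduce it to a one-dimensional angular integral over a geodesic sphere, recognise that integral as Euler's integral representation of a Gauss hypergeometric function, and match parameters to the stated form via a Pfaff transformation followed by the terminating-series connection formula. Throughout, $P_{-1}$ is taken to be the Poisson kernel $P_\xi(x) = \left(\frac{1-|x|^2}{|x-\xi|^2}\right)^{n-1}$ based at a fixed boundary point $\xi \in \partial\mathbb{B}^n$, with the geodesic spheres $S_R$ centred at the origin $o$; any two choices of $\xi$ differ by an isometry fixing $o$ together with a positive scaling, so the free constant $c_1$ will absorb this ambiguity (and any normalisation of $P_{-1}$).

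First I would record the form of $P_{-1}$ on $S_R$. The sphere $S_R$ is the Euclidean sphere of radius $\tanh(R/2)$, and for $x \in S_R$ making Euclidean angle $\theta$ with $\xi$, a short half-angle computation gives $\frac{1-|x|^2}{|x-\xi|^2} = (\cosh R - \sinh R\cos\theta)^{-1}$ (equivalently, $P_\xi = e^{-(n-1)b_\xi}$ with the Busemann identity $e^{b_\xi(\exp_o(Rv)) - b_\xi(o)} = \cosh R - \sinh R\cos\theta$), so $P_{-1}^2 = (\cosh R - \sinh R\cos\theta)^{-2(n-1)}$ on $S_R$. Writing $d\sigma_R = \sinh^{n-1}(R)\,\sin^{n-2}\theta\,d\theta\,d\mathrm{vol}_{S^{n-2}}$ in geodesic polar coordinates around $o$ (with $\theta$ the polar angle measured from the direction of $\xi$) and integrating out the $S^{n-2}$-symmetry yields
$$\int_{S_R}P_{-1}^2\,d\sigma_R = |S^{n-2}|\,\sinh^{n-1}(R)\int_0^\pi\frac{\sin^{n-2}\theta\,d\theta}{(\cosh R - \sinh R\cos\theta)^{2(n-1)}}.$$

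Next I would evaluate the angular integral. Substituting $u = \cos\theta$ and then $s = (u+1)/2$, and using $\cosh R - u\sinh R = e^R(1 - (1-e^{-2R})s)$, it becomes a constant times
$$e^{-2(n-1)R}\int_0^1 s^{\frac{n-3}{2}}(1-s)^{\frac{n-3}{2}}(1 - (1-e^{-2R})s)^{-2(n-1)}\,ds = e^{-2(n-1)R}\,B\!\left(\tfrac{n-1}{2},\tfrac{n-1}{2}\right)\,{}_2F_1\!\left(2n-2,\tfrac{n-1}{2};n-1;1-e^{-2R}\right),$$
by Euler's integral formula for ${}_2F_1$ (legitimate since $n-1 > \tfrac{n-1}{2} > 0$ and $0 < 1-e^{-2R} < 1$; the integrand is integrable at $\theta = 0,\pi$ also when $n=2$). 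Now the Pfaff transformation ${}_2F_1(\alpha,\beta;\gamma;z) = (1-z)^{-\beta}\,{}_2F_1(\gamma-\alpha,\beta;\gamma;z/(z-1))$ with $z = 1-e^{-2R}$ (so $1-z = e^{-2R}$ and $z/(z-1) = 1-e^{2R}$) converts the first parameter $2n-2$ into $1-n$ at the price of a factor $e^{(n-1)R}$, giving $e^{(n-1)R}\,{}_2F_1(1-n,\tfrac{n-1}{2};n-1;1-e^{2R})$. Since the first parameter is now the non-positive integer $1-n$, this is a polynomial, and the terminating-series connection formula ${}_2F_1(-m,\beta;\gamma;z) = \tfrac{(\gamma-\beta)_m}{(\gamma)_m}\,{}_2F_1(-m,\beta;\beta-\gamma-m+1;1-z)$ with $m = n-1$, $\gamma = n-1$, $\beta = \tfrac{n-1}{2}$ sends the argument to $e^{2R}$ and the third parameter to $\beta - \gamma - m + 1 = \tfrac{5-3n}{2}$, introducing the positive constant $(\tfrac{n-1}{2})_{n-1}/(n-1)_{n-1}$. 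Gathering the exponential factors as $e^{-2(n-1)R}\cdot e^{(n-1)R} = e^{-(n-1)R}$ and absorbing $|S^{n-2}|$, the power of $2$, the Beta value, the Pochhammer ratio and the normalisation of $P_{-1}$ into a single $c_1 > 0$ produces precisely the claimed identity.

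The step I expect to be the main obstacle is the bookkeeping in the previous paragraph: the chain of Pfaff/Kummer transformations carries compensating powers of $e^{\pm(n-1)R}$ that must be tracked exactly, and one must confirm that the terminating-series identity invoked is the correct one — it is the $z \mapsto 1-z$ connection formula, with the second Kummer solution annihilated by the $1/\Gamma(1-n) = 0$ factor — and that the accumulated constant $c_1$ is genuinely positive, which reduces to the evident inequality $(\tfrac{n-1}{2})_{n-1}/(n-1)_{n-1} > 0$. A minor point, settled by the standard convention that a terminating hypergeometric series is read as a polynomial, is that for small $n$ (e.g.\ $n = 3$) the parameter $\tfrac{5-3n}{2}$ is itself a non-positive integer; no genuine difficulty arises there. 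As a consistency check, for $n = 2$ the whole computation collapses to the elementary evaluation $\int_{S_R}P_{-1}^2\,d\sigma_R = \pi\sinh(2R)$, which matches the stated right-hand side with $c_1 = \pi$.
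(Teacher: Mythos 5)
Your argument is correct. Note that the paper offers no in-text proof of Proposition \ref{hypergeometric} at all --- it simply cites \cite[Proposition 4.8.1]{Aaron} --- so your derivation is a self-contained replacement rather than a variant of the paper's route. I checked the key steps: with $r=\tanh(R/2)$ one indeed gets $\tfrac{1-r^2}{1-2r\cos\theta+r^2}=(\cosh R-\sinh R\cos\theta)^{-1}$, so $P_{-1}^2=(\cosh R-\sinh R\cos\theta)^{-2(n-1)}$ on $S_R$; the substitutions $u=\cos\theta$, $s=(u+1)/2$ together with $\cosh R-u\sinh R=e^{R}\bigl(1-(1-e^{-2R})s\bigr)$ land exactly on Euler's integral for ${}_{2}F_{1}\bigl(2n-2,\tfrac{n-1}{2};n-1;1-e^{-2R}\bigr)$; the Pfaff transformation produces the compensating factor $e^{(n-1)R}$ (so that $e^{-2(n-1)R}\cdot e^{(n-1)R}=e^{-(n-1)R}$, matching the statement) and the parameters $(1-n,\tfrac{n-1}{2};n-1)$ at argument $1-e^{2R}$; and the terminating $z\mapsto 1-z$ connection formula gives the third parameter $\tfrac{5-3n}{2}$ with the positive constant $(\tfrac{n-1}{2})_{n-1}/(n-1)_{n-1}$. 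The potential degeneracy you flag is genuinely harmless: $(\tfrac{5-3n}{2})_j$ vanishes only when $j-1\geq\tfrac{3n-5}{2}$, which never occurs within the terminating range $j\leq n-1$ for $n\geq 2$, so the polynomial is well defined. Your $n=2$ check ($\pi\sinh(2R)$ versus $c_1\sinh(R)(1+e^{2R})e^{-R}$) is consistent. The one presentational caveat is that the proposition as stated fixes no normalization of $P_{-1}$, so, as you say, the constant $c_1$ must be allowed to absorb the choice of kernel; this is compatible with how $P_{-1}$ is used elsewhere in the paper (only the identity $|\nabla P_{-1}|^2=(n-1)^2P_{-1}^2$ and the growth rate of $Q(R)$ are needed).
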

\begin{proof}
By adopting the notation $Q(R)=\int_{S_{R}}P^2_{-1}d\sigma_{R}$ as in Section \ref{PH}, we obtain that $Q(R)$ satisfies the second order linear differential equation
\[
Q^{\prime\prime}-(n-1)\coth(R)Q^{\prime}+[(n-1)\csch^2(R)-2(n-1)^2]Q=0.
\]
 This equation follows directly by taking one derivative of Equation \eqref{eq2} and recalling that $H(R)=(n-1)\coth(R)$. Next, we define an auxiliary function $u(R)$ by setting
\[
Q(R)=\sinh^{n-1}(R)u(R).
\]
Since the area of a geodesic sphere of radius $R$ in $(\mathbb{H}^n_{\IR},g^{-1}_h)$ is proportional to $\sinh^{n-1}(R)$, we notice that $u(0)\neq 0$ as $P_{-1}$ is nowhere vanishing. Next, one computes that $u(R)$ satisfies the second order linear differential equation
\begin{equation}\label{L}
u^{\prime\prime}+(n-1)\coth(R)u^{\prime}-2(n-1)^2u=0.
\end{equation}
Notice that for any constant $c>0$, the function $ce^{(n-1)R}$ is a subsolution for the second order linear differential operator associated to Equation \eqref{L}
\[
L=\frac{d^2}{dR^2}+(n-1)\coth(R)\frac{d}{dR}-2(n-1)^2.
\]
Since $-2(n-1)^2<0$, the standard maximum principle argument tells us that there exists $c>0$ such that
\[
u(R)\leq ce^{(n-1)R}
\]
for any $R\geq 0$. This completes the proof.
\end{proof}
Concluding, since $Q(R)=\int_{S_{R}}P^2_{-1}d\sigma_{R}$ as in Section \ref{PH}, we obtain that $Q(R)$ grows at most like $e^{2(n-1)R}$ as used in the proof of Proposition \ref{positiveh}. By rescaling the hyperbolic metric to have sectional curvature $-k^{\prime}$, we obtain the corresponding growth rate $e^{2\sqrt{|k^{\prime}|}(n-1)R}$ as used at the end of the proof of Theorem \ref{PHmain}.


\end{document}